\def\PP{{ \mathbf{P}}}
\def\OO{{\mathcal O}}
\def\R{\mathbf{R}}
\def\D{\mathbf{D}}
\def\F{\mathcal{F}}
\def\E{\mathcal{E}}
\def\Pic{{\rm Pic}}
\theoremstyle{plain}
\newtheorem{theorem}{Theorem}[section]
\newtheorem{proposition/example}[theorem]{Proposition/Example}
\newtheorem{corollary}[theorem]{Corollary}
\newtheorem{problem}[theorem]{Problem}
\newtheorem{variant}[theorem]{Variant}
\newtheorem{conjecture}[theorem]{Conjecture}
\theoremstyle{definition}
\newtheorem{conjecture/question}[theorem]{Conjecture/Question}
\newtheorem{remark/definition}[theorem]{Remark/Definition}
\newtheorem{definition/notation}[theorem]{Definition/Notation}
\theoremstyle{remark}
\numberwithin{equation}{section}
\keywords{Derived categories, Picard variety, cohomological support loci}
\subjclass[2000]{14F05, 14K30}
\begin{document}

\title[Derived equivalence and non-vanishing loci]{Derived 
equivalence and non-vanishing loci}

\author{Mihnea Popa}
\address{Department of Mathematics, University of Illinois at Chicago,
851 S. Morgan Street, Chicago, IL 60607, USA } \email{{\tt
mpopa@math.uic.edu}}
\thanks{The author was partially supported by the NSF grant DMS-1101323.}



\setlength{\parskip}{.09 in}

\dedicatory{To Joe Harris, with great admiration.}

\maketitle

\section{The conjecture and its variants}

The purpose of this note is to propose and motivate a conjecture on the behavior of cohomological support loci for topologically trivial line bundles under derived equivalence, to verify it in the case of surfaces, and to explain further developments.
The reason for such a conjecture is the desire to understand the relationship between the cohomology groups of (twists of) the canonical line bundles of  derived equivalent varieties. This in turn is motivated by the following well-known problem, stemming from a prediction of Kontsevich in the case of Calabi-Yau manifolds (and which would also follow from the main conjecture in Orlov \cite{orlov2}).

\begin{problem}
Let $X$ and $Y$ be smooth projective complex varieties with $\D(X) \simeq \D(Y)$.  Is it true that
$h^{p,q}(X) = h^{p,q}(Y)$ for all $p$ and  $q$?
\end{problem}

Here, given a smooth projective complex variety $X$, we denote by $\D(X)$ the bounded derived 
category of coherent sheaves $\D^{{\rm b}} ({\rm Coh}(X))$.  
For surfaces the answer is yes, for instance because of the derived invariance of Hochschild homology \cite{orlov1}, \cite{caldararu}. 
This is also true for threefolds, again using the invariance of Hochschild homology, together 
with the behavior of the Picard variety under derived equivalence \cite{PS}. In general, even the 
invariance of $h^{0, q}$ with $1< q < \dim X$ is not known at the moment, and this leads to the 
search for possible methods for circumventing the difficult direct study of the cohomology groups $H^i (X, \omega_X)$.

More precisely, in \cite{PS} it is shown that if $\D(X) \simeq \D(Y)$, then ${\rm Pic}^0 (X)$ and ${\rm Pic}^0 (Y)$ are isogenous. 
This opens the door towards studying the behavior or more refined objects associated to irregular varieties
(i.e. those with $q (X) = h^0 (X, \Omega_X^1) > 0$) under derived equivalence. Among the most important such objects 
are the \emph{cohomological support loci} of the canonical bundle: given a 
smooth projective $X$, for $i = 0, \ldots, \dim X$ one defines
$$V^i( \omega_X) : = \{ \alpha ~|~ H^i (X, \omega_X \otimes \alpha) \neq 0\} \subseteq {\rm Pic}^0 (X).$$
By semicontinuity, these are closed algebraic subsets of ${\rm Pic}^0 (X)$. 
It has become clear in recent years that these loci are the foremost tool in studying the special birational geometry of irregular 
varieties, with applications ranging from results about singularities of theta divisors 
\cite{EL} to the proof of Ueno's conjecture \cite{ChH}.
They are governed by the following fundamental results of generic vanishing theory (\cite{GL1}, \cite{GL2}, \cite{Arapura}, \cite{Simpson}):

\noindent
$\bullet$~ \,\,~If $a: X\rightarrow {\rm Alb}(X)$ is the Albanese map of $X$, then 
\begin{equation}\label{gv}
{\rm codim}~V^i (\omega_X) \ge i - \dim X + \dim a(X) \,\,\,\, {\rm for~all~i},
\end{equation}
and there exists an $i$ for which this is an equality.

\noindent
$\bullet$~\,\,~The  irreducible components of each $V^i (\omega_X)$ are torsion translates of
abelian subvarieties of $\Pic^0(X)$.

\noindent
$\bullet$ ~\,\,~Each positive dimensional component of some $V^i (\omega_X)$ corresponds to a fibration $f:X\rightarrow Y$ 
onto a normal variety with $0 < \dim Y \le \dim X - i$ and with generically finite Albanese map. 

The main point of this note is the following conjecture, saying that cohomological support loci should be preserved by derived equivalence. 
In the next sections I will explain that the conjecture holds for surfaces, and 
that is almost known to hold for threefolds.

\begin{conjecture}\label{main}
Let $X$ and $Y$ be smooth projective varieties with $\D(X) \simeq \D(Y)$. Then 
$$V^i (\omega_X)  \simeq V^i (\omega_Y) \,\,\,\, {\rm for ~all~} i \ge 0.$$ 
\end{conjecture}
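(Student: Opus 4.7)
The plan is to leverage the Rouquier isomorphism attached to a derived equivalence $\Phi : \D(X) \to \D(Y)$, which provides an isomorphism of algebraic groups
$$F : \Aut^0(X) \times \Pic^0(X) \lra \Aut^0(Y) \times \Pic^0(Y)$$
intertwining the natural autoequivalences $\mathcal{F} \mapsto \varphi_* \mathcal{F} \otimes L$. Writing $F(\id, L) = (\tau(L), \sigma(L))$, the projection $\sigma$ is precisely the isogeny of Picard varieties constructed in \cite{PS}, and the aim is to prove that $\sigma$, possibly modified on each component by a torsion translate, sends $V^i(\omega_X)$ isomorphically onto $V^i(\omega_Y)$.

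Combining the Rouquier intertwining with the invariance of Serre functors under $\Phi$ (which uses $\dim X = \dim Y$ and gives $\Phi(\omega_X \otimes \mathcal{F}) \simeq \omega_Y \otimes \Phi(\mathcal{F})$), one obtains
$$\Phi(\omega_X \otimes L) \simeq \omega_Y \otimes \tau(L)_* \Phi(\OO_X) \otimes \sigma(L).$$
Since $\Phi$ preserves $\mathrm{Ext}$-groups, applying $\mathrm{Ext}^i_Y(\Phi(\OO_X), -)$ then yields
$$H^i(X, \omega_X \otimes L) \simeq \mathrm{Ext}^i_Y\bigl(\Phi(\OO_X),\, \omega_Y \otimes \tau(L)_* \Phi(\OO_X) \otimes \sigma(L)\bigr).$$
In the favourable case $\Phi(\OO_X) \simeq \OO_Y$ the right-hand side collapses to $H^i(Y, \omega_Y \otimes \sigma(L))$ -- here one uses that $\tau(L) \in \Aut^0(Y)$ acts trivially on $\Pic^0(Y)$ via pullback, so it contributes nothing to $\sigma(L)$ and only an isomorphism at the level of cohomology -- and the desired identification of loci is immediate. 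In general the plan is to combine this identity with a family argument over $\Pic^0(X)$, using base change for the relative Fourier-Mukai kernel, so that the presence of $\Phi(\OO_X)$ is absorbed into pushforward data whose support is still governed by $\sigma$.

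The hard part, and the main obstacle in arbitrary dimension, is precisely the control of $\Phi(\OO_X)$: as a general object of $\D(Y)$ it may have support of positive dimension and nontrivial higher cohomology sheaves, and these data enter the Ext computation directly. For surfaces this obstacle is sidestepped by invoking the Bridgeland-Maciocia classification of Fourier-Mukai partners together with Kawamata's reconstruction theorem for $\kappa = 2$. Apart from the trivial cases $X \simeq Y$ and $q(X) = 0$, one is reduced to abelian surfaces (where $V^i(\omega_X) = \{\OO_X\}$ for all $i$), bielliptic surfaces (controlled by the quotient-of-product description of FM partners), and properly elliptic surfaces (where the partners arise as relative Jacobians of differing degree over a common base curve, and $V^i(\omega_X)$ is read off from Leray pushforward along the elliptic fibration). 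In each of these cases the explicit form of the FM partner, combined with the known $\sigma$, reduces the matching $V^i(\omega_X) \simeq V^i(\omega_Y)$ to a direct verification.
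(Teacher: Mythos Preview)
The statement is a conjecture, and the paper only establishes it in the surface case (Theorem~\ref{surfaces}); your general Rouquier-based plan is essentially the strategy pursued by Lombardi (see \S3), but as you yourself acknowledge, the presence of $\Phi(\OO_X)$ as an arbitrary object of $\D(Y)$ blocks the argument for all $i$. Lombardi's method yields $V^0(\omega_X) \simeq V^0(\omega_Y)$ and partial information about $V^1$, but not the full conjecture; your unspecified ``family argument over $\Pic^0(X)$'' does not close this gap, and should not be presented as if it did.

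For surfaces your outline is in the same spirit as the paper's proof, but with real omissions. First, bielliptic surfaces have \emph{no} nontrivial Fourier--Mukai partners by Bridgeland--Maciocia, so there is nothing to verify there. Second, you omit the irregular $\kappa = -\infty$ case: elliptic surfaces over $\PP^1$ with $q = 1$, which the paper handles by showing they are $\PP^1$-bundles over an elliptic curve with $V^0 = \emptyset$ and $V^1 = V^2 = \{0\}$. Third, and most seriously, for properly elliptic surfaces over a curve $C$ of genus $g \ge 2$ you do not distinguish the isotrivial and non-isotrivial cases. The non-isotrivial case ($q = g$) is indeed handled by Koll\'ar's decomposition of $\R f_* \omega_X$ as you suggest, giving $V^0 = V^1 \simeq \Pic^0(C)$. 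But the isotrivial case ($q = g+1$, $\chi(\omega_X) = 0$) is considerably more delicate: here $V^0 = V^1 = \Pic^0(X,f) = \Ker\big(\Pic^0(X) \to \Pic^0(F)\big)$ is a proper subvariety, and the paper needs Beauville's classification of positive-dimensional components of $V^1$, a Castelnuovo--de Franchis type inequality to rule out nontrivial isolated points, and finally Pham's theorem that $\Pic^0(X,f) \simeq \Pic^0(Y,h)$ for derived-equivalent isotrivial elliptic fibrations. ``Read off from Leray pushforward'' does not cover this, and it is exactly the case where the $V^i$ must be matched nontrivially between $X$ and $Y$.
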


Note that I am proposing isomorphism, even though the ambient spaces ${\rm Pic}^0 (X)$ and ${\rm Pic}^0 (Y)$ may only be isogenous.
There are roughly speaking three main reasons for this: (1) the conjecture is known to hold for surfaces and for most threefolds, 
as explained in \S2 and \S3; (2) it holds for $V^0$ in arbitrary dimension, as explained at the beginning of \S3; (3) more heuristically, 
according to \cite{PS} the failure of isomorphism at the level of ${\rm Pic}^0$ is induced by the presence of abelian varieties in the picture, 
and for these all cohomological support loci consist only of the origin.

Furthermore, denote by $V^i( \omega_X)_0$ the union of the 
irreducible components of $V^i (\omega_X)$  passing through the origin. Generic vanishing theory tells 
us that in many applications one only needs to control well $V^i (\omega_X)_0$. 
In fact, for all applications I currently have in mind, the following variant of Conjecture \ref{main} suffices.

\begin{variant}\label{var1}
Under the same hypothesis, 
$$V^i (\omega_X)_0  \simeq V^i (\omega_Y)_0 \,\,\,\, {\rm for ~all~} i \ge 0.$$ 
\end{variant}

The key fact implied by this variant is that, excepting perhaps surjective maps to abelian varieties, 
roughly speaking two derived equivalent varieties must have the same types of fibrations onto 
lower dimensional irregular varieties (see Corollary \ref{fibrations}). This would hopefully allow for 
further geometric tools in the classification of irregular derived partners.
Even weaker versions of Conjecture \ref{main} and Variant \ref{var1} are of interest, as they are all that is needed in other applications. 

\begin{variant}\label{var2}
Under the same hypothesis, 
$$\dim V^i (\omega_X)  = \dim V^i (\omega_Y) \,\,\,\, {\rm for ~all~} i \ge 0.$$ 
\end{variant}

\begin{variant}\label{var3}
Under the same hypothesis, 
$$\dim V^i (\omega_X)_0  = \dim V^i (\omega_Y)_0 \,\,\,\, {\rm for ~all~} i \ge 0.$$ 
\end{variant}

For instance, Variant \ref{var3} implies the derived invariance of the Albanese dimension (see Corollary \ref{albanese_dimension}). Other numerical applications, and progress due to Lombardi \cite{lombardi} in the case of $V^0$ and $V^1$, and on the full conjecture for threefolds, are described in \S3. In \S2 I present a proof of Conjecture \ref{main} in the case of surfaces.

\section{A proof of Conjecture \ref{main} for surfaces}

Due to the classification of Fourier-Mukai equivalences between surfaces, in this case the conjecture reduces to a calculation of all possible cohomological support loci via a case by case analysis, combined with some elliptic surface theory, generic vanishing theory, well-known results of Koll\'ar on higher direct images of dualizing sheaves, and of Beauville on the positive dimensional components of $V^1 (\omega_X)$. This of course does not have much chance to generalize to higher dimensions. In the next section I will point to more refined techniques developed by Lombardi \cite{lombardi}, which recover the case of surfaces, but do address higher 
dimensions as well.

\begin{theorem}\label{surfaces}
Conjecture \ref{main} holds when $X$ and $Y$ are smooth projective surfaces.
\end{theorem}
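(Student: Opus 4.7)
The plan is to reduce to a case-by-case analysis based on the classification of Fourier--Mukai partners of surfaces, and to compute $V^i(\omega_X)$ explicitly in each case. First I would invoke the Bridgeland--Maciocia--Kawamata classification: if $X$ is a smooth projective surface and $Y$ is a non-trivial FM partner, then $X$ and $Y$ both belong to one of three classes --- (a) K3 surfaces, (b) abelian surfaces, or (c) relatively minimal properly elliptic surfaces. In case (a) one has $q(X)=0$, hence $\Pic^0(X)=0$, and each $V^i(\omega_X)$ is either $\{0\}$ or empty; the same holds for $Y$. In case (b), $\omega_X \simeq \OO_X$ and $V^i(\omega_X) = \{0\}$ for all $i$, and every FM partner of an abelian surface is again abelian of the same dimension, so the loci match.

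The substantive case is (c), an elliptic fibration $f: X \to C$ with $\kappa(X) = 1$. Any FM partner $Y$ arises as a relative moduli space of semistable sheaves on the fibers, and in particular comes equipped with its own elliptic fibration $g: Y \to C$ over the same base curve, with corresponding singular and multiple fibers. I would use three tools: the canonical bundle formula $\omega_X \simeq f^{*}(\omega_C \otimes \LL_X) \otimes \OO_X(\sum a_i F_i)$ with $\LL_X = (R^1 f_* \OO_X)^{\vee}$; Kollár's decomposition $Rf_* \omega_X \simeq f_*\omega_X \oplus R^1 f_*\omega_X[-1]$ together with his explicit identifications of these direct images; and the Leray spectral sequence, which then collapses. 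Since the fibration has $\kappa = 1$, $f^{*}$ induces an isomorphism $\Pic^0(C) \simeq \Pic^0(X)$, so $V^i(\omega_X)$ may be read off $C$. For $i = 0$ and $i = 2$ the resulting descriptions are straightforward (with $V^2 = \{0\}$ by Serre duality); the core of the problem is $V^1$.

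For $V^1(\omega_X)$ I would separate positive-dimensional components from isolated torsion points. By Beauville's theorem, every positive-dimensional component pulls back from a surjective morphism $X \to B$ onto a smooth curve of genus at least $2$; since fibers of $f$ are elliptic curves, any such morphism factors through $f$ via Stein factorization. Hence these components are encoded purely in the base curve $C$ and the line bundle $\LL_X$ (via Kollár's description of $f_*\omega_X$), and are shared with $Y$. The isolated components of $V^1(\omega_X)$ are then torsion points $\alpha \in \Pic^0(C)$ at which $h^0$ or $h^1$ on $C$ of $f_*\omega_X \otimes \alpha$ jumps above generic; these are controlled by $\LL_X$, the genus of $C$, and the torsion divisors $\sum a_i p_i$ coming from the multiple fibers of $f$.

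The main obstacle I anticipate is verifying that this full combinatorial data --- the line bundle $\LL_X$ on $C$, the points $p_i$ with their multiplicities $m_i$, and the weights $a_i$ --- is invariant under the FM equivalence with $Y$. This should follow from the Bridgeland--Maciocia description of $Y$ as a twisted relative compactified Jacobian over $C$: such twisting preserves the discriminant locus and the multiple fiber configuration on $C$, while the numerical invariants $\chi(\OO_X)$ and $g(C)$ (hence $\deg \LL_X$) are derived invariants via Hochschild homology. Once this matching is in place, combining Kollár's decomposition with generic vanishing yields an explicit identification $V^1(\omega_X) \simeq V^1(\omega_Y)$ under the induced isomorphism $\Pic^0(X) \simeq \Pic^0(Y)$, completing the remaining case.
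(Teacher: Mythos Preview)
Your overall strategy --- reduce via Bridgeland--Maciocia--Kawamata and compute $V^i(\omega_X)$ on elliptic surfaces using Koll\'ar's decomposition and Beauville's description of $V^1$ --- matches the paper's. But there is a genuine gap: your claim that ``since the fibration has $\kappa = 1$, $f^*$ induces an isomorphism $\Pic^0(C) \simeq \Pic^0(X)$'' is false when $f$ is isotrivial. For an isotrivial elliptic fibration $f:X\to C$ over a curve of genus $g\ge 2$ one has $q(X)=g+1$, and $\Pic^0(X)$ is a nontrivial extension of $J(C)$ by an elliptic curve isogenous to the general fiber. In this case the loci $V^i(\omega_X)$ cannot be read off on $C$, and your proposed matching of the combinatorial data $(\LL_X,\{p_i,m_i,a_i\})$ on $C$ does not see the extra direction in $\Pic^0(X)$ at all. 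This isotrivial case is in fact the crux of the proof: the paper shows $V^0(\omega_X)=V^1(\omega_X)=\Pic^0(X,f):=\ker\big(\Pic^0(X)\to\Pic^0(\text{fiber})\big)$ by combining Beauville's theorem (which, incidentally, also allows components coming from fibrations over elliptic curves with a multiple fiber --- a possibility your statement of Beauville omits and which must be ruled out separately), the equality $\chi(\omega_X)=0$, and a Castelnuovo--de Franchis-type inequality from \cite{LP} to exclude isolated points. The final matching $\Pic^0(X,f)\simeq\Pic^0(Y,h)$ is then a nontrivial input from Pham's thesis (or alternatively Lombardi), not a consequence of the derived invariance of $\chi$ and $g(C)$.

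A smaller omission: your case (c) restricts to properly elliptic surfaces ($\kappa=1$), but the Bridgeland--Maciocia classification allows minimal elliptic surfaces of any Kodaira dimension. In particular, an irregular minimal elliptic surface over $\PP^1$ is in fact a $\PP^1$-bundle over an elliptic curve (so $\kappa=-\infty$) and must be treated; the computation there is easy ($V^0=\emptyset$, $V^1=V^2=\{0\}$) but should not be skipped.
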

\begin{proof}
The first thing to note is that, due to the work of Bridgeland-Maciocia \cite{bm} and Kawamata \cite{kawamata}, 
Fourier-Mukai equivalences of surfaces are completely classified. According to \cite{kawamata} Theorem 1.6, the only non-minimal 
surfaces that can have derived partners are rational elliptic, and therefore regular. Hence we can restrict to minimal surfaces. Among these on the other hand, 
according to \cite{bm} Theorem 1.1, only abelian, $K3$ and elliptic surfaces can have distinct derived partners. 

Now $K3$ surfaces are again regular, hence for these the problem is trivial. On the other hand, on any abelian variety $A$ (of arbitrary dimension) one has 
$$V^i (\omega_A) = \{ 0\} \,\,\,\, {\rm for ~all~} i,$$
and since the only derived partners of abelian varieties are again abelian varieties (see \cite{hn} Proposition 3.1; cf. also \cite{PS}, end of \S3), the problem is again trivial. Therefore our question is truly a question about elliptic surfaces which are not rational. Moreover, according to \cite{bm}, bielliptic surfaces do not have non-trivial derived partners. We are 
left with certain elliptic fibrations over $\PP^1$, and with elliptic fibrations over smooth projective curves of genus at least $2$. I will try in each case to present the most
elementary proof I am aware of.

Let first $f: X \rightarrow \PP^1$ be an elliptic surface over $\PP^1$. Since our problem is non-trivial only for irregular surfaces, requiring $q (X) \neq 0$ 
we must then have that $q (X) = 1$, which implies that $f$ is 
isotrivial, and in fact that $X$ is a $\PP^1$-bundle 
$$\pi: X \longrightarrow E$$
over an elliptic curve $E$. We can now compute the cohomological support loci $V^i (\omega_X)$ explicitly.
Note first that $V^2(\omega_X) = \{0\}$ for any smooth projective surface, by Serre duality. In the case at 
hand, note also that $\pi$ is the Albanese map of $X$. Therefore, identifying line bundles in 
${\rm Pic}^0 (X)$ and ${\rm Pic}^0 (E)$, for every $\alpha \in {\rm Pic}^0 (X)$ we have
$$H^0 (X, \omega_X \otimes \alpha) \simeq H^0 (E, \pi_* \omega_X \otimes \alpha),$$
which implies that $V^0 (\omega_X) = V^0 (E, \pi_* \omega_X)$.\footnote{In general, for any coherent sheaf $\F$ on a smooth projective variety $Z$, and any integer $i$, we denote
$V^i (Z, \F) : = \{ \alpha \in {\rm Pic}^0 (Z) ~|~ h^i (Z, \F \otimes \alpha) \neq 0 \}$.} But given that $\pi_* \omega_X$ must be torsion-free, and the fibers of $\pi$ are rational curves, we have $\pi_* \omega_X = 0$, and so 
$V^0 (\omega_X) = \emptyset$. We are left with computing $V^1 (\omega_X)$. For this, recall that 
by \cite{kollar2} Theorem 3.1, in $\D (E)$ we have the decomposition
$$\R \pi_* \omega_X \simeq \pi_* \omega_X \oplus R^1\pi_* \omega_X [-1] \simeq 
R^1\pi_* \omega_X [-1],$$
where the second isomorphism follows from what we said above.
Therefore, for any $\alpha \in {\rm Pic}^0 (X)$, we have 
$$H^1 ( X, \omega_X \otimes \alpha) \simeq H^0 (E, R^1 \pi_* \omega_X \otimes \alpha).$$
Finally, \cite{kollar1} Proposition 7.6 implies that, as the top non-vanishing higher direct image, 
$$R^1 \pi_* \omega_X \simeq \omega_E \simeq \OO_E,$$
which immediately gives that $V^1 (\omega_X) = \{0\}$. In conclusion, we have obtained that for the 
type of surface under discussion we have 
$$V^0 (\omega_X) = \emptyset, \,\,\,\, V^1 (\omega_X) = \{0\}, \,\,\,\, V^2 (\omega_X) = \{0\}.$$
Finally, if $Y$ is another smooth projective surface such that $\D(X) \simeq \D(Y)$, then due to \cite{bm} 
Proposition 4.4 we have that $Y$ is another elliptic surface over $\PP^1$ with the same properties as $X$, which leads therefore to the same cohomological support loci.

Assume now that $f: X \rightarrow C$ is an elliptic surface over a smooth projective curve $C$ of 
genus $g \ge 2$ (so that $\kappa(X) = 1$).  By the same \cite{bm} Proposition 4.4, if $Y$ is another smooth projective surface such that $\D(Y) \simeq \D(X)$, then $Y$ has an elliptic fibration structure $h: Y \rightarrow C$ over the same curve (and with isomorphic fibers over a Zariski open set in $C$; in fact it is a relative Picard 
scheme associated to $f$). There are two cases, namely when $f$ is isotrivial, and  when it is not. It is well known (see e.g. \cite{beauville1} Exercise IX.1 and \cite{friedman} Ch.7) that $f$ is isotrivial if and only if $q (X) = g + 1$ (in which case the only singular fibers are multiple fibers with smooth reduction), and it is not isotrivial if and only if $q (X) = g$. Since we know that $q (X) = q (Y)$, we conclude that $h$ must be of the same type as $f$. We will again compute all $V^i (\omega_X)$ in the two cases.

Let's assume first that $f$ is not isotrivial.  As mentioned above, in this case $q (X) = g$, and in fact 
$f^*: {\rm Pic}^0 (C) \rightarrow {\rm Pic}^0 (X)$ is an isomorphism. To compute $V^1 (\omega_X)$, we use 
again \cite{kollar2} Theorem 3.1, saying that in $\D(C)$ there is a direct sum decomposition
$$\R f_* \omega_X \simeq f_* \omega_X \oplus R^1 f_* \omega_X [-1],$$
and therefore for each $\alpha \in {\rm Pic}^0 (X)$ one has
$$H^1 (X, \omega_X \otimes \alpha) \simeq H^1 (C, f_* \omega_X \otimes \alpha) \oplus
H^0 (C, R^1 f_* \omega_X \otimes \alpha).$$
Once again using \cite{kollar1} Proposition 7.6, we also have $R^1 f_* \omega_X \simeq \omega_C$. This,  combined with the decomposition above, gives the inclusion
$$f^*: {\rm Pic}^0 (C) = V^0 (C, \omega_C)  \hookrightarrow V^1 (\omega_X),$$
finally implying $V^1 (\omega_X) = {\rm Pic}^0 (X)$.
Finally, note that by the Castelnuovo inequality \cite{beauville1} Theorem X.4, we have $\chi (\omega_X) \ge 0$. Now the Euler characteristic is a 
deformation invariant, hence $\chi (\omega_X \otimes \alpha) \ge 0$ for all $\alpha \in {\rm Pic}^0 (X)$.
For $\alpha \neq \OO_X$, this gives 
$$h^0 (X, \omega_X \otimes \alpha) \ge h^1 (X, \omega_X \otimes \alpha),$$ 
so that $V^1 (\omega_X) \subset V^0 (\omega_X)$. By the above, we obtain 
$V^0 (\omega_X) = {\rm Pic}^0 (X)$ as well. We rephrase the final result as saying that
$$V^0 (\omega_X) = V^1 (\omega_X) \simeq {\rm Pic}^0 (C), \,\,\,\, 
V^2 (\omega_X) = \{0\}.$$
The preceding paragraph says that the exact same calculation must hold for a Fourier-Mukai partner $Y$. 

Let's now assume that $f$ is isotrivial. 
First note that for such an $X$ we have $q (X) = g + 1$, and in fact the Albanese variety of $X$ is an extension of abelian varieties 
$$ 1 \rightarrow F \rightarrow {\rm Alb} (X) \rightarrow J (C) \rightarrow 1$$
with $F$ an elliptic curve isogenous to the general fiber of $f$, though this will not play an explicit role in the calculation. Moreover, we have $\chi (\omega_X) = 0$ (see \cite{friedman} Ch.7, Lemma 14 and Corollary 17).

We now use a result of Beauville \cite{beauville2} \cite{beauville3}, characterizing the positive dimensional irreducible 
components of  $V^1 (\omega_X)$. Concretely, by \cite{beauville3} Corollaire 2.3, any such positive dimensional component would have to come either from a fiber space $h: X \rightarrow B$ over a curve of genus at least $2$, or from a fiber space 
$p: X \rightarrow F$ over an elliptic curve, with at least one multiple fiber.
Regarding the first type, the union of all such components
is shown in \emph{loc. cit.} to be equal to 
$${\rm Pic}^0 (X, h) : = {\rm Ker} \big({\rm Pic}^0 (X) \overset{i^*}{\longrightarrow} {\rm Pic}^0 (F)\big),$$
where $i^*$ is the restriction map to any smooth fiber $F$ of $h$. 
But since $f$ is an elliptic fibration, it is clear that there is exactly one such fiber space, namely $f$ itself 
(otherwise the elliptic fibers of any other fibration would have to dominate $C$, which is impossible). Therefore the
union of the components coming from fibrations over curves of genus at least $2$ is ${\rm Pic}^0 (X, f)$.
On the other hand,  for elliptic surfaces of the  type we are currently considering,  fibrations $p: X \rightarrow F$ over elliptic curves 
as described above do not exist. (Any such 
would have to come from a group action on a product between an elliptic curve $F^\prime$ and another of genus at least $2$, with the action on the elliptic component having no fixed points, therefore leading to an \'etale cover $F^\prime\rightarrow F$; in the language of \cite{beauville3}, we are saying that $\Gamma^0 (p) = \{0\}$.)

Using once more the deformation invariance of the Euler characteristic, we have 
$\chi (\omega_X\otimes \alpha) = 0$ for all $\alpha \in {\rm Pic}^0 (X)$, which gives
\begin{equation}\label{nonzero}
h^1 (X, \omega_X \otimes \alpha) = h^0 (X, \omega_X \otimes \alpha), \,\,\,\, {\rm for~all~} \alpha \neq \OO_X.
\end{equation}
This implies that ${\rm Pic}^0 (X, f)$ is also the union of all positive dimensional components of 
$V^0 (\omega_X)$.

We are left with considering nontrivial isolated points in $V^1(\omega_X)$ (or equivalently in $V^0 (\omega_X)$ by (\ref{nonzero})).
These can be shown not to exist by means of a different argument:
by a variant of the higher dimensional Castelnuovo-de Franchis inequality, see
\cite{LP} Remark 4.13, an isolated point $\alpha \neq 0$ in $V^1 (\omega_X)$ forces the 
inequality
$$\chi (\omega_X) \ge q(X) - 1 = g \ge 2,$$
which contradicts the fact that $\chi (\omega_X) = 0$.\footnote{As L. Lombardi points out, a variant of the derivative complex argument  in \cite{LP} leading to this inequality can also be used, as an alternative to Beauville's argument, in order to show that positive dimensional components not passing through the origin do not exist in the case of surfaces of maximal Albanese dimension with $\chi(\omega_X) = 0$.}

Putting everything together, we obtain
$$V^0 (\omega_X) = V^1 (\omega_X) =
 {\rm Pic}^0 (X, f), \,\,\,\, V^2 (\omega_X) = \{0\}.$$
Recall that a Fourier-Mukai partner of $X$ must be an elliptic fibration of the same type over $C$.
Now one of the main results of \cite{pham}, Theorem 5.2.7,  says that for derived equivalent elliptic 
fibrations $f: X \rightarrow C$ and $h: Y \rightarrow C$ which are isotrivial with only multiple fibers, one has
 $${\rm Pic}^0 (X, f) \simeq {\rm Pic}^0 (Y,h ),\footnote{This also follows from \cite{lombardi}, via Theorem \ref{luigi} below.}$$
 which allows us to conclude that $V^i (\omega_X)$ and
$V^i (\omega_Y)$ are isomorphic.
\end{proof}

\section{Further evidence and applications}

\noindent
{\bf Progress.}
Progress towards the conjectures in \S1 has been made by Lombardi \cite{lombardi}.
The crucial point is to come up with an explicit mapping realizing the potential isomorphisms in 
Conjecture \ref{main}. This is done by means of the \emph{Rouquier isomorphism}; namely, 
given a Fourier-Mukai equivalence $\R\Phi_{\E} : \D(X) \rightarrow \D(Y)$ induced by an object
$\E \in  \D(X\times Y)$, Rouquier \cite{rouquier} Th\'eor\`eme 4.18 shows that there is an induced isomorphism of algebraic groups
$$F : {\rm Aut}^0 (X) \times {\rm Pic}^0 (X) \longrightarrow {\rm Aut}^0 (Y) \times {\rm Pic}^0 (Y)$$
given by a concrete formula involving $\E$ (usually mixing the two factors), \cite{PS} Lemma 3.1. 
A key result in
\cite{lombardi} is that if $\alpha \in V^0 (\omega_X)$ and 
$$F({\rm id}_X, \alpha) = ( \varphi, \beta),$$
then in fact $\varphi = {\rm id}_Y$, $\beta \in V^0 (\omega_Y)$, and moreover 
\begin{equation}\label{precise}
H^0 (X, \omega_X\otimes \alpha) \simeq H^0 (Y, \omega_Y \otimes \beta).
\end{equation}
One of the main tools used there 
is the derived invariance of a generalization of Hochschild homology taking into account the 
Rouquier isomorphism. This implies the  invariance of $V^0$, while further work using a variant of
the Hochschild-Kostant-Rosenberg isomorphism gives the following, again the isomorphisms being 
induced by the Rouquier mapping.

\begin{theorem}[Lombardi \cite{lombardi}]\label{luigi}
Let $X$ and $Y$ be smooth projective varieties with $\D(X) \simeq \D(Y)$. Then:

\noindent
(i) $V^0 (\omega_X) \simeq V^0 (\omega_Y)$. 

\noindent
(ii) $V^1 (\omega_X) \cap V^0 (\omega_X) \simeq V^1 (\omega_Y) \cap V^0 (\omega_Y)$. 

\noindent
(iii) $V^1 (\omega_X)_0 \simeq V^1 (\omega_Y)_0$. 
\end{theorem}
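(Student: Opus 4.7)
The plan is to follow the Hochschild-theoretic strategy sketched immediately before the theorem statement. The central construction is a \emph{twisted Hochschild homology} $HH_\bullet(X, \varphi, L)$ associated to a pair $(\varphi, L) \in \Aut^0(X) \times \Pic^0(X)$, built out of the graph $\Gamma_\varphi \subset X \times X$ twisted by $L$ (for instance as appropriate ${\rm Ext}$-groups between $\OO_\Delta$ and $(\Gamma_\varphi)_* L$, with a fixed homological shift). The first task is to prove that a Fourier-Mukai equivalence $\Phi_\E : \D(X) \to \D(Y)$ induces a graded isomorphism
\[HH_\bullet(X, \varphi, L) \simeq HH_\bullet(Y, \psi, M)\]
whenever $F(\varphi, L) = (\psi, M)$. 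This should follow by the same convolution argument that proves derived invariance of ordinary Hochschild homology, now tracking how the kernel $\E$ intertwines the twisted diagonals on the two sides via the Rouquier formula.

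The second ingredient is a twisted Hochschild-Kostant-Rosenberg decomposition. For $\varphi = \id$ it should take the form
\[HH_n(X, \id, L) \simeq \bigoplus_{q - p = n} H^q(X, \Omega^p_X \otimes L),\]
and for general $\varphi$ the right-hand side is replaced by analogous cohomology on the fixed locus $X^\varphi$ with normal-bundle twists. The crucial consequence is that the extremal degree $n = -\dim X$ can only receive a contribution from the Hodge bidegree $(p, q) = (\dim X, 0)$, and that contribution demands $X^\varphi$ of full dimension, forcing $\varphi = \id$. For (i), if $\alpha \in V^0(\omega_X)$ and $F(\id_X, \alpha) = (\varphi, \beta)$, the non-zero class in $H^0(X, \omega_X \otimes \alpha)$ sits as the unique contributor to $HH_{-\dim X}(X, \id, \alpha)$ and transports to a non-zero class in $HH_{-\dim X}(Y, \varphi, \beta)$; the dimension argument then forces $\varphi = \id_Y$ and yields the canonical isomorphism $H^0(X, \omega_X \otimes \alpha) \simeq H^0(Y, \omega_Y \otimes \beta)$ of (\ref{precise}). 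Running the same reasoning for the quasi-inverse gives bijectivity, hence (i).

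For (ii), part (i) guarantees that $F$ sends $\{\id_X\} \times V^0(\omega_X)$ into $\{\id_Y\} \times V^0(\omega_Y)$, so for $\alpha \in V^0(\omega_X)$ the twisted HH isomorphism becomes $HH_\bullet(X, \id, \alpha) \simeq HH_\bullet(Y, \id, \beta)$ in all degrees. In degree $1 - \dim X$ the HKR decomposition exhibits the two summands
\[H^1(X, \omega_X \otimes \alpha) \,\oplus\, H^0(X, \Omega^{\dim X - 1}_X \otimes \alpha),\]
and the derived invariance does not automatically separate them. To isolate $H^1(\omega_X \otimes \alpha)$ one would invoke compatibility of the equivalence with the natural $HH^\bullet$-module structure on $HH_\bullet$, or equivalently the preservation of the Hodge-weight $p$-filtration that survives once the automorphism twist is trivial; this finer structure would yield (ii). For (iii), $F$ is a group isomorphism fixing $(\id, 0)$, and by the Beauville-Green-Lazarsfeld classification every positive-dimensional component of $V^1(\omega_X)$ through the origin arises from a fibration whose pulled-back Picard subvariety is also contained in $V^0(\omega_X)$; combining this with (ii) applied to such components, and handling isolated points by a direct tangent-space argument at the origin, yields (iii).

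I expect the main obstacle to be the Hodge-bidegree separation in $HH_{1 - \dim X}$ needed for (ii): this mirrors the well-known failure of an arbitrary derived equivalence to preserve the HKR grading, and overcoming it must require exploiting the extra structure available on the $V^0$-locus where the Rouquier image carries trivial automorphism part. The structural inclusion $V^1(\omega_X)_0 \subseteq V^0(\omega_X)$ used in (iii) is likewise non-trivial and depends on genuinely geometric input from the theory of cohomological support loci rather than on any formal Hochschild argument.
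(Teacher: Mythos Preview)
The paper does not contain a proof of this theorem: it is stated as a result of Lombardi \cite{lombardi}, and the surrounding text only describes the strategy (twisted Hochschild homology compatible with the Rouquier isomorphism, together with a variant of the Hochschild--Kostant--Rosenberg decomposition). There is therefore no ``paper's own proof'' to compare your proposal against beyond that outline.

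Your proposal faithfully reconstructs that outline, and your argument for (i) is the one the paper alludes to. You are also honest about the two places where the sketch is incomplete. For (ii), the problem you flag---separating $H^1(X,\omega_X\otimes\alpha)$ from $H^0(X,\Omega_X^{\dim X-1}\otimes\alpha)$ inside $HH_{1-\dim X}$---is exactly the subtle point, and invoking an $HH^\bullet$-module or filtration compatibility is the right shape of idea, but as stated it is not a proof; this is precisely the ``further work using a variant of the Hochschild--Kostant--Rosenberg isomorphism'' that the paper defers to \cite{lombardi}. For (iii), your reduction relies on the inclusion $V^1(\omega_X)_0\subseteq V^0(\omega_X)$, which is \emph{not} a general fact (it fails, for instance, whenever $p_g(X)=0$ but $V^1(\omega_X)_0\neq\emptyset$); the actual argument has to proceed differently, again using the twisted HKR machinery near the origin rather than bootstrapping from (ii). So while your overall plan matches the paper's description, both (ii) and (iii) as you have written them contain genuine gaps that are only resolved in the cited reference, not here.
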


This result recovers Theorem \ref{surfaces} in a more formal way.
In the case when $\dim X = \dim Y = 3$, with extra work one shows that this has the following consequences, 
verifying or getting close to verifying the various conjectures:

\noindent
$\bullet$~ \,\,~Variant \ref{var1} holds.

\noindent
$\bullet$~\,\,~For any $i$, $V^i (\omega_X)$ is positive dimensional if and only if $V^i (\omega_Y)$ is positive 
dimensional, and of the same dimension. Therefore Variant \ref{var2} holds, except for the possible case where for 
some $i >0$, $V^i (\omega_X)$ is finite, while $V^i (\omega_Y) = \emptyset$. This last case can possibly happen 
only when $q (X) = 1$.

\noindent
$\bullet$~\,\,~Conjecture \ref{main} is true when:
\begin{enumerate}
\item $X$ is of maximal Albanese dimension (i.e. the Albanese map of $X$ is generically finite onto its image).
\item $V^0 (\omega_X) = {\rm Pic}^0 (X)$ -- for instance, by \cite{PP} Theorem E, this condition holds whenever 
the Albanese image $a(X)$ is not fibered in subtori of ${\rm Alb}(X)$, and $V^0 (\omega_X) \neq \emptyset$. 
\item ${\rm Aut}^0 (X)$ is affine --  this holds for 
varieties which are not isotrivially fibered over a positive dimensional abelian variety (see \cite{brion} p.2 and \S3), for instance again when the Albanese image is not fibered in subtori of ${\rm Alb}(X)$ according to a theorem of Nishi (cf. \cite{matsumura} Theorem 2).
\end{enumerate}

These conditions together impose very strong restrictions on the threefolds for which the conjecture is 
not yet known.
Note finally that in \cite{lombardi} there are further extensions involving cohomological support loci for $\omega_X^{\otimes m}$ with 
$m \ge 2$, and for $\Omega_X^p$ with $p < \dim X$.

\noindent
{\bf Some first applications.}
Let $X$ be a smooth projective complex variety of dimension $d$, and let $a: X \rightarrow A = {\rm Alb}(X)$ 
be the Albanese map of $X$.  
A first consequence of the weakest version of the conjectures would be the derived invariance of the Albanese 
dimension $\dim a(X)$.

\begin{corollary}[\textbf{assuming Variant \ref{var3}}]\label{albanese_dimension}
If $X$ and $Y$ are smooth projective complex varieties with $\D(X) \simeq \D(Y)$, then
$$\dim a(X) = \dim a(Y).$$
\end{corollary}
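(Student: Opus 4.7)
The strategy is to recover the Albanese defect $\delta(X) := \dim X - \dim a(X)$ from the sequence of dimensions $\dim V^i(\omega_X)_0$, together with $\dim X$ and $q(X)$, and then apply Variant~\ref{var3} on both $X$ and $Y$.

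First I would collect the relevant invariances. Derived equivalence preserves the dimension of the variety, so $\dim X = \dim Y$. By \cite{PS} the Picard varieties $\Pic^0(X)$ and $\Pic^0(Y)$ are isogenous; in particular $q(X) = q(Y)$. Combined with Variant~\ref{var3}, this yields
$$
\codim_{\Pic^0(X)} V^i(\omega_X)_0 \,=\, \codim_{\Pic^0(Y)} V^i(\omega_Y)_0 \quad \text{for every } i \ge 0.
$$

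Next I would express $\delta(X)$ in terms of these codimensions. Since $V^i(\omega_X)_0 \subseteq V^i(\omega_X)$, the Green--Lazarsfeld inequality (\ref{gv}) gives
$$
\codim V^i(\omega_X)_0 \,\ge\, \codim V^i(\omega_X) \,\ge\, i - \delta(X),
$$
so $\delta(X) \ge \delta_0(X) := \max_i \bigl( i - \codim V^i(\omega_X)_0 \bigr)$. The key point is the reverse inequality $\delta(X) \le \delta_0(X)$, which amounts to showing that the equality case of (\ref{gv}) is achieved at some component of $V^i(\omega_X)$ passing through the origin. For this I would appeal to a refinement of generic vanishing, drawn from the Koll\'ar decomposition of $\R a_* \omega_X$ together with the Pareschi--Popa analysis of GV-sheaves on abelian varieties, producing an index $i$ and a component of $V^i(\omega_X)$ through $0$ of codimension exactly $i - \delta(X)$.

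Granting $\delta = \delta_0$ for both $X$ and $Y$, the invariance of codimensions from the first step gives $\delta(X) = \delta_0(X) = \delta_0(Y) = \delta(Y)$, and hence $\dim a(X) = \dim X - \delta(X) = \dim Y - \delta(Y) = \dim a(Y)$. The main obstacle is precisely this refined generic vanishing step: without it, Variant~\ref{var3} alone yields only $\delta(X) \ge \delta_0(Y)$ and $\delta(Y) \ge \delta_0(X)$, both of which collapse to $\delta \ge \delta_0$ after using the codimension equality and so cannot be combined to conclude $\delta(X) = \delta(Y)$.
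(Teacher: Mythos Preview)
Your proposal is correct and follows essentially the same route as the paper: both arguments rest on the identity $\dim a(X) = \min_i\{d - i + \codim V^i(\omega_X)_0\}$ (equivalently, your $\delta(X) = \delta_0(X)$), combined with the derived invariance of $d$ and $q$ and Variant~\ref{var3}. The paper simply cites this formula from \cite{LP} Remark~2.4, whereas you spell out the two inequalities and sketch a proof of the harder direction via Koll\'ar's decomposition and GV-sheaf theory; either way the substance is the same.
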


This follows from the fact that, according to \cite{LP} Remark 2.4,  the Albanese dimension can be computed from the dimension 
of the cohomological support loci around the origin, according to the formula
$$\dim a(X) = \underset{i = 0, \ldots, d}{{\rm min}} \{ d - i + {\rm codim}~V^i (\omega_X)_0 \}.$$
Note that Lombardi \cite{lombardi} is in fact able to prove Corollary \ref{albanese_dimension} 
when $\kappa(X) \ge 0$ by relying on different tools from birational geometry.
The only progress when $\kappa(X) = -\infty$, namely a solution for surfaces and threefolds 
that can  also be found in \emph{loc. cit.}, involves the approach described here. 

Another numerical application involves the holomorphic Euler characteristic. While the individual Hodge numbers are not yet 
known to be preserved by derived equivalence, the Euler characteristic can be attacked in some cases by using generic vanishing theory and 
the derived invariance of $V^0 (\omega_X)$ established in Theorem \ref{luigi}.

\begin{corollary}[\emph{of Theorem \ref{luigi}}, \cite{lombardi}]
If $X$ and $Y$ are smooth projective complex varieties with $\D(X) \simeq \D(Y)$, and $X$ is of maximal Albanese dimension, 
then $\chi (\omega_X) = \chi (\omega_Y)$.
\end{corollary}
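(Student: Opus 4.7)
The plan is to combine generic vanishing --- which takes a particularly clean form under the maximal Albanese dimension hypothesis --- with the derived invariance of $V^0(\omega_X)$ from Theorem \ref{luigi}(i). The key observation is that under these assumptions $\chi(\omega_X)$ coincides with the generic value of $h^0(X, \omega_X \otimes \alpha)$ on $\Pic^0(X)$, and this is precisely the quantity that Theorem \ref{luigi}(i), refined by the cohomological identification (\ref{precise}), transfers to $Y$.

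The first step is to observe that since $X$ has maximal Albanese dimension, the inequality (\ref{gv}) specializes to $\codim V^i(\omega_X) \geq i$ for every $i$, so for $\alpha$ in a dense Zariski open subset of $\Pic^0(X)$ one has $H^i(X, \omega_X \otimes \alpha) = 0$ for all $i > 0$. Combined with the deformation invariance of the Euler characteristic, this gives
\[
\chi(\omega_X) \ = \ h^0(X, \omega_X \otimes \alpha) \quad \text{for generic } \alpha \in \Pic^0(X).
\]
In particular $\chi(\omega_X) = 0$ when $V^0(\omega_X) \subsetneq \Pic^0(X)$, while $\chi(\omega_X)$ equals the generic value of $h^0$ along $V^0(\omega_X) = \Pic^0(X)$ in the remaining case.

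The second step is to perform the analogous calculation on the $Y$-side and match it with the $X$-side via (\ref{precise}). Since $\dim \Pic^0(X) = \dim \Pic^0(Y)$ by the isogeny result of \cite{PS}, and since the Rouquier map produces an isomorphism of algebraic varieties $V^0(\omega_X) \simeq V^0(\omega_Y)$, the two loci simultaneously fill their respective $\Pic^0$'s or are simultaneously proper subvarieties. In the first case the Rouquier map, being an isomorphism of algebraic groups, sends a generic $\alpha$ to a generic $\beta$ and (\ref{precise}) identifies the corresponding $H^0$'s, so the two generic values of $h^0$ agree. In the second case both generic $h^0$'s vanish. Either way we conclude $\chi(\omega_X) = \chi(\omega_Y)$, \emph{provided} the identity $\chi(\omega_Y) = h^0(Y, \omega_Y \otimes \beta)$ holds for generic $\beta$.

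The main obstacle is exactly this last proviso: it requires $Y$ to be of maximal Albanese dimension as well, and this is not a direct consequence of Theorem \ref{luigi}. I would address it by first noting that maximal Albanese dimension forces $\kappa(X) \geq 0$ --- the image $a(X)$ is a subvariety of an abelian variety and hence has non-negative Kodaira dimension by Ueno's theorem, and this lifts to $X$ through the generically finite map $a : X \to a(X)$ --- and then invoking Lombardi's derived invariance of the Albanese dimension for varieties of non-negative Kodaira dimension recalled earlier in this section. Once $Y$ is also known to be of maximal Albanese dimension, the first step applies symmetrically and closes the argument.
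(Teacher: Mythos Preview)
Your proposal is correct and follows the same route as the paper's sketch: use (\ref{gv}) in the maximal Albanese dimension case to identify $\chi(\omega_X)$ with the generic $h^0$, then transport this value to $Y$ via (\ref{precise}). You are in fact more careful than the paper, which simply writes ``combined with (\ref{precise})'' and leaves implicit the point you single out as the main obstacle --- that one needs $Y$ to be of maximal Albanese dimension to run the symmetric computation. Your resolution via $\kappa(X)\ge 0$ and Lombardi's derived invariance of the Albanese dimension is exactly the ingredient the paper alludes to just before the corollary, so this is the intended fix rather than a detour.
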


This follows from the fact that, according to (\ref{gv}), for generic $\alpha \in {\rm Pic}^0 (X)$ one has
$$\chi (\omega_X) = \chi (\omega_X \otimes \alpha) = h^0 (X, \omega_X \otimes \alpha),$$
combined with (\ref{precise}). The argument is extended in \cite{lombardi} to other cases as well.
Going back to Hodge numbers, this implies for instance that if $X$ and $Y$ are derived 
equivalent $4$-folds of maximal Albanese dimension, then 
$$h^{0,2} (X) = h^{0,2} (Y),\footnote{This also automatically implies $h^{1,3}(X) = h^{1,3}(Y)$ by the invariance of Hochschild homology.}$$
since in the case of $4$-folds all the other $h^{0,q}$ Hodge numbers are known to be preserved.

Perhaps the main point in this picture is the fact that the positive dimensional components of the cohomological support
loci $V^i (\omega_X)$ reflect the nontrivial fibrations of $X$ over irregular varieties. Therefore, roughly speaking, the key geometric significance of Conjecture \ref{main} is that derived equivalent varieties should have the same type of fibrations over lower dimensional irregular varieties, thus allowing 
for more geometric tools in the study of Fourier-Mukai partners.
One version of this principle can be stated as follows:

\begin{corollary}[\textbf{assuming Variant \ref{var1}}]\label{fibrations}
Let $X$ and $Y$ be smooth projective varieties such that $\D(X) \simeq \D(Y)$. Fix an integer $m>0$,
 and assume that $X$ admits a morphism $f: X \rightarrow Z$ with connected fibers, onto a normal irregular variety of dimension $m$ whose Albanese map is not surjective. 
 Then $Y$ admits a morphism
$h: Y \rightarrow W$ with connected fibers, onto a positive dimensional normal irregular variety of dimension $\le m$. Moreover, if $m= 1$, then 
$W$ can also be taken to be a curve of genus at least $2$.
\end{corollary}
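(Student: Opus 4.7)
The plan is to replace $f$ with a companion fibration $f' : X \to Z'$ onto a normal variety of maximal Albanese dimension but not itself an abelian variety, use generic vanishing theory to produce a positive-dimensional component of some $V^{i_0}(\omega_X)_0$, transport it to $Y$ via Variant \ref{var1}, and extract the required fibration $h : Y \to W$ by applying the third bullet of Section 1 to $Y$.

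First I would take the Stein factorization
\[
X \xrightarrow{\,f'\,} Z' \xrightarrow{\,g\,} a_Z(Z) \subseteq {\rm Alb}(Z)
\]
of the composition $a_Z \circ f$, where $a_Z$ denotes the Albanese map of $Z$. The hypotheses that $Z$ is irregular and that $a_Z$ is non-surjective guarantee that $a_Z(Z)$ is a positive-dimensional proper subvariety of ${\rm Alb}(Z)$ that generates ${\rm Alb}(Z)$ as a group, and in particular is not a translate of a sub-abelian variety. Consequently $Z'$ is a positive-dimensional normal variety with $\dim Z' = \dim a_Z(Z) \leq m$, has generically finite Albanese map, and is not itself an abelian variety. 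Set $i_0 := \dim X - \dim Z'$. By the converse direction of the third bullet --- a standard consequence of Koll\'ar's splitting $\R f'_* \omega_X \simeq \bigoplus_j R^j f'_* \omega_X[-j]$ (on a resolution of $Z'$) together with generic vanishing for varieties of maximal Albanese dimension that are not birational to abelian varieties --- the sub-abelian variety $(f')^* {\rm Pic}^0(Z') \subseteq {\rm Pic}^0(X)$ of dimension $q(Z')$ contributes a positive-dimensional component to $V^{i_0}(\omega_X)_0$. Variant \ref{var1} then transports this to a positive-dimensional component of $V^{i_0}(\omega_Y)_0$, and applying the third bullet to $Y$ produces a fibration $h : Y \to W$ with $W$ normal and irregular, of generically finite Albanese map, and of dimension $0 < \dim W \leq \dim Y - i_0 = \dim Z' \leq m$ (using $\dim X = \dim Y$).

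For the case $m = 1$ the curve $Z$ has non-surjective Albanese map, forcing $g(Z) \geq 2$. The above construction gives $Z' = Z$ and a component $f^* J(Z) \subseteq V^{\dim X -1}(\omega_X)_0$ of dimension $g(Z) \geq 2$; its transport to $V^{\dim Y -1}(\omega_Y)_0$ has dimension at least $2$, and by the third bullet corresponds to a fibration $h : Y \to W$ with $W$ a curve. Since the component equals $h^* J(W)$ up to a torsion translate, its dimension is $g(W)$, forcing $g(W) \geq 2$. The main obstacle I foresee is the converse-direction input used in the previous paragraph to produce the component on $X$ --- namely, verifying that $(f')^* {\rm Pic}^0(Z')$ meets $V^{i_0}(\omega_X)$ in a positive-dimensional locus through the origin despite $Z'$ being only normal. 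This is handled by passing to a resolution $\widetilde{Z}' \to Z'$, using the identification $R^{i_0} f'_* \omega_X \simeq \omega_{\widetilde{Z}'}$, and invoking Ein-Lazarsfeld/Chen-Hacon/Pareschi-Popa positivity statements for $\omega_{\widetilde{Z}'}$ under the assumption of maximal Albanese dimension and non-birationality to an abelian variety.
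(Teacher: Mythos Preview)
Your overall strategy --- produce a positive-dimensional component of some $V^{i}(\omega_X)_0$ from the fibration, transport it via Variant \ref{var1}, and read off a fibration on $Y$ from the Green--Lazarsfeld structure theorem --- is exactly the paper's. But the paper executes it more directly and with a different (and correct) justification at the key step, and your version has a genuine gap there.

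The paper does \emph{not} Stein-factor through the Albanese image: it works with the given $f:X\to Z$ and uses Koll\'ar's splitting to obtain $f^*V^0(\omega_Z)\subseteq V^{n-m}(\omega_X)$, where $n=\dim X$. The crucial input is then \cite{EL} Proposition~2.2: if $0$ were an isolated point of $V^0(\omega_Z)$ the Albanese map of $Z$ would be surjective, so the hypothesis forces a positive-dimensional component of $V^0(\omega_Z)$ through the origin, hence one of $V^{n-m}(\omega_X)_0$. Variant \ref{var1} and \cite{GL2} then finish as you say, and the $m=1$ refinement is obtained from Beauville's description \cite{beauville3} of the positive-dimensional components of $V^{n-1}$.

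Your gap is precisely at the point you flag as the ``main obstacle.'' You assert that the \emph{entire} sub-abelian variety $(f')^*\mathrm{Pic}^0(Z')$ lies in $V^{i_0}(\omega_X)_0$, which would require $V^0(\omega_{Z'})=\mathrm{Pic}^0(Z')$. This is false in general for varieties of maximal Albanese dimension that are not birational to an abelian variety: already for $Z'=C\times E$ with $g(C)\ge 2$ and $E$ elliptic one has $V^0(\omega_{Z'})=\mathrm{Pic}^0(C)\times\{0\}\subsetneq\mathrm{Pic}^0(Z')$. The ``positivity statements'' you invoke do not give this. What is both true and sufficient is that $V^0(\omega_{Z'})_0$ is positive-dimensional, and the clean reason is again \cite{EL} Proposition~2.2: your construction forces $\dim\mathrm{Alb}(Z')\ge\dim\mathrm{Alb}(Z)>\dim a_Z(Z)=\dim Z'$, so the Albanese map of $Z'$ is not surjective, and \cite{EL} applies. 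Once you make this correction your argument goes through, but the Stein factorization detour is then superfluous --- one may as well apply \cite{EL} directly to $Z$, as the paper does.
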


This is due to the fact that, by the degeneration of the Leray spectral sequence for $\R f_* \omega_X$ 
due to Koll\'ar \cite{kollar1}, one has 
$$f^* V^0 (\omega_Z) \subset V^{n - m} (\omega_X),$$ 
where $n = \dim X = \dim Y$.
Now in \cite{EL} Proposition 2.2 it is shown that if $0$ is an isolated point in $V^0 (\omega_Z)$, then
the Albanese map of $Z$ must be surjective. Thus the hypothesis implies that we obtain a positive 
dimensional component in $V^{n - m} (\omega_X)_0$, hence by Variant \ref{var1} also in 
$V^{n - m} (\omega_Y)_0$. Going in reverse, 
recall now from \S1 that, according to one of the main results of 
\cite{GL2}, a positive dimensional component of $V^{n- m} (\omega_Y)$ produces a fiber space 
$h: Y\rightarrow W$,  
with $W$ a positive dimensional normal irregular variety (with generically finite Albanese map) and $\dim W \le m$. 
The slightly stronger statement in the case of fibrations over curves follows from the precise description of the positive 
dimensional components of $V^{n-1} (\omega_X)$ given in \cite{beauville3}.

I suspect that one should be able to remove the non-surjective Albanese map hypothesis (in other
words allow maps onto abelian varieties), but this must go beyond the methods described here.

\noindent
{\bf Acknowledgements.} 
I thank L. Lombardi and C. Schnell for numerous useful conversations on the topic presented here.

\providecommand{\bysame}{\leavevmode\hbox
to3em{\hrulefill}\thinspace}

\end{document}